\newcommand{\QQ}{\mathbb{Q}}
\newcommand{\PP}{\mathbb{P}}
\newcommand{\sph}{\mathbb{S}}
\newcommand{\CP}{\mathbb{CP}}
\newcommand{\HP}{\mathbb{HP}}
\newcommand{\ZZ}{\mathbb{Z}}
\newcommand{\SU}{\operatorname{SU}}
\newcommand{\Sp}{\operatorname{Sp}}
\newcommand{\Span}{\operatorname{span}}
\newtheorem{theorem}{Theorem}
\newtheorem{proposition}[theorem]{Proposition}
\newtheorem*{PWconj}{Petersen-Wilhelm's Conjecture}
\newtheorem{maintheorem}{Theorem}
\newtheorem{maincorollary}[maintheorem]{Corollary}
\theoremstyle{definition}
\theoremstyle{remark}
\newtheorem*{ack}{Acknowledgements}
\title[A note on the Petersen-Wilhelm conjecture]{A note on the Petersen-Wilhelm Conjecture}
\author[D.~Gonz\'alez-\'Alvaro]{David Gonz\'alez-\'Alvaro}
\address{Universidad Aut\'onoma de Madrid, Spain}
\email{dav.gonzalez@uam.es}
\author[M.~Radeschi]{Marco Radeschi}
\address{University of Notre Dame, US}
\email{mradesch@nd.edu}
\thanks{D. G.-\'A. received support from research grants MTM2014-57769-3-P and MTM2014-57309-REDT (MINECO)}
\subjclass[2010]{53C21, 57R19}
\renewcommand{\arraystretch}{1.}
\begin{document}

\maketitle

\begin{abstract} In this note we consider submersions from compact manifolds, homotopy equivalent to the Eschenburg or Bazaikin spaces of positive curvature. We show that if the submersion is nontrivial, the dimension of the base is greater than the dimension of the fiber. Together with previous results, this proves the Petersen-Wilhelm Conjecture for all the known compact manifolds with positive curvature.
\end{abstract}
 
\section{Introduction}

At the present time there is a very small number of known methods to construct examples of manifolds with nonnegative or positive (sectional) curvature. Moreover, Riemannian submersions are present in the construction of almost all existing examples of positively curved manifolds (see \cite{Zil} for a survey on the topic). This is because, by the Gray-O'Neill formula, a lower bound for the sectional curvatures of the total space also bounds the curvatures of the base space. In the search of new examples of manifolds with positive curvature, understanding the behavior of Riemannian submersions under curvature-related assumptions seems crucial. In that direction, the following conjecture has been of great interest in the last decade.

\begin{PWconj}
If $M\to B$ is a Riemannian submersion between compact, positively curved manifolds, with fiber $F$, then $\dim F < \dim B$.
\end{PWconj}

It is worth mentioning that, although this conjecture has been attributed in the literature to Fred Wilhelm, during the preparation of this article the authors learned that it was originally considered jointly with Peter Petersen.

The conjecture is known to hold under the extra assumption that at least two of the fibers are totally geodesic, thanks to Frankel's theorem. On the other hand, there exist counterexamples if one weakens the conjecture to require positive sectional curvature on an open and dense set of the total space (cf. \cite{Ker} and \cite[p. 167]{Che}). We refer the reader to \cite{GAG,Sp} for recent progress towards the conjecture in the general case, and to \cite{FZ} for a complete list of all the (few) currently known Riemannian submersions between positively curved manifolds.


The goal of this note is to show that every Riemannian submersion from any of the known compact positively curved manifolds satisfies the Petersen-Wilhelm conjecture. In fact, the results here will show that any \emph{submersion} (not necessarily Riemannian) from any compact manifold \emph{homotopy equivalent} to one of the known positively curved examples, satisfies the conjecture.

Recall that, by Bonnet-Myers Theorem, positively curved manifolds have finite fundamental group and therefore for our purposes it is enough to study simply connected examples. For the convenience of the reader, we briefly review all existing examples of simply connected manifolds with positive curvature.

\begin{itemize}
\item Spaces admitting a homogeneous metric of positive curvature. These have been classified (see \cite{WZ}) and include the classical compact rank one symmetric spaces, three single examples due to Wallach, two single examples due to Berger and an infinite family due to Aloff-Wallach: 
\[
\mathbb{S}^n, \CP^n, \HP^n, Ca\PP^2,\quad W^6,W^{12},W^{24},\quad B^7,B^{13},\quad W_{p,q}^7.
\]
\item Eschenburg \cite{Esc} constructed positively curved metrics on a biquotient space $E^6$ of dimension $6$; and on an infinite family of biquotients of dimension $7$, which are circle quotients $\SU(3)/\!/S^1$ and include the Aloff-Wallach spaces $W_{p,q}^7$ as a subfamily.

\item Bazaikin \cite{Baz} constructed positively curved metrics on an infinite family of biquotients $\SU(5)/\!/\Sp(2)S^1$ of dimension $13$, which contains the Berger space $B^{13}$ as a particular case.

\item Dearricot \cite{Dea} and Grove-Verdiani-Ziller \cite{GVZ} recently constructed a cohomogeneity one  positively curved manifold of dimension $7$, denoted $P_2$.
\end{itemize}

In a recent work, Amann and Kennard studied a generalized version of the Petersen-Wilhelm conjecture. Given a fibration $F\to M\to B$, they gave topological conditions for $M$ to ensure that $\dim F<\dim B$ (see Theorem A in \cite{AK}). These conditions are satisfied by any manifold that is rationally homotopy equivalent to any of the simply connected compact rank one symmetric spaces or to any of $W^6,W^{12},W^{24}$. Recall that $E^6$ is rationally homotopy equivalent to $W^6$, and that $B^7$ and $P_2$ are rational spheres. It follows that all of these examples satisfy (a stronger version of) the Petersen-Wilhelm conjecture. In this note we deal with the remaining examples, namely the Eschenburg biquotients of dimension $7$, and the Bazaikin biquotients, for which we prove the following results. 


\begin{maintheorem}\label{T:Eschenburg}
Let $M^7$ be a compact manifold, homotopy equivalent to a $7$-dimensional Eschenburg space. Then there are no submersions from $M^7$ to a manifold of dimension $\leq 3$.
\end{maintheorem}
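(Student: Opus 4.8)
The plan is to use that $M^7$ is compact, so by Ehresmann's theorem every submersion $M \to B$ is a fiber bundle $F \hookrightarrow M \xrightarrow{\pi} B$; it then suffices to exclude such bundles with $1 \le \dim B \le 3$, i.e.\ with $\dim F = 7 - \dim B \ge 4$. Throughout I will use that $M$ is simply connected with the integral cohomology of an Eschenburg space: $H^2(M;\ZZ) = \ZZ\langle u\rangle$, $H^3(M;\ZZ) = 0$, $H^4(M;\ZZ) \cong \ZZ/r$ with $r > 1$ and generated by $u^2$, $H^5(M;\ZZ) = \ZZ$, $H^6(M;\ZZ) = 0$, $H^7(M;\ZZ) = \ZZ$, and $H^*(M;\QQ) \cong H^*(\sph^2\times\sph^5;\QQ)$. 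If $\dim B = 1$ then $B = \sph^1$, so $\pi$ yields a surjection $\pi_1(M) \twoheadrightarrow \ZZ$, which is absurd. For $\dim B \in \{2,3\}$ the group $\pi_1(B)$ is a quotient of $\pi_1(M) = 0$, so $B = \sph^2$, respectively $B = \sph^3$ (via the Poincaré conjecture); in the latter case $\pi_2(\sph^3) = 0$ also forces $\pi_1(F) = 0$.

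For $\dim B = 3$: here $F^4$ is a simply connected closed oriented $4$-manifold, hence $H^*(F;\ZZ)$ is torsion-free, and I would run the Wang exact sequence
\[
\cdots \to H^q(M;\ZZ) \xrightarrow{i^*} H^q(F;\ZZ) \xrightarrow{D} H^{q-2}(F;\ZZ) \to H^{q+1}(M;\ZZ) \to \cdots
\]
of $F \hookrightarrow M \to \sph^3$, where $i^*$ is fiber restriction and $D$ is a derivation of degree $-2$. Chasing low degrees gives $b_2(F) = 2$, that $i^*$ identifies $H^2(M;\ZZ)$ with a primitive line $\ZZ\langle a\rangle$ ($a := i^*u$, $Da = 0$ since restrictions are permanent cocycles), that $D\colon H^2(F;\ZZ) \twoheadrightarrow H^0(F;\ZZ) = \ZZ$ (so $Dv = 1$ for some $v$, and then $\{a,v\}$ is a $\ZZ$-basis), and that $D\colon H^4(F;\ZZ) \hookrightarrow H^2(F;\ZZ)$ has primitive image $\ZZ\langle D\omega\rangle$, $\omega$ a generator of $H^4(F;\ZZ)$. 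Since $i^*$ kills the torsion class $u^2$, $a^2 = i^*(u^2) = 0$. The Leibniz rule gives $D(v^2) = 2v$ and $D(av) = a$, so writing $v^2 = c\omega$, $av = c'\omega$ one gets $2v = \pm c\,D\omega$, $a = \pm c'\,D\omega$, forcing (compare contents, using that $D\omega$ is primitive and $\{a,v\}$ is a basis) $a$ to be a $\ZZ$-multiple of $v$, which is impossible; if instead $v^2 = 0$, then $0 = D(v^2) = 2v \ne 0$, equally impossible.

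For $\dim B = 2$: now $B = \sph^2$ and $F$ is a closed orientable $5$-manifold; let $d$ be the degree of $\pi^*\colon H^2(\sph^2;\ZZ) = \ZZ \to H^2(M;\ZZ) = \ZZ$, so $\pi_1(F) \cong \ZZ/d$ (with $\ZZ/0 := \ZZ$). If $d = \pm1$ then $u = \pm\pi^*(s)$ and $u^2 = \pi^*(s^2) = 0$, contradicting $\operatorname{ord}(u^2) = r > 1$. If $d = 0$ then $\pi_1(F) = \ZZ$, and the Wang sequence together with Poincaré duality on $F$ forces $H^*(F;\ZZ)$ to be torsion-free with Betti numbers $(1,1,2,2,1,1)$, and the degree $-1$ Wang derivation $D$ to satisfy $D|_{H^1}$ iso, $D|_{H^3}$ injective, $D|_{H^2}\colon H^2(F;\ZZ)\twoheadrightarrow H^1(F;\ZZ) = \ZZ$; then, choosing $b$ with $Db = t$ a generator of $H^1(F;\ZZ)$, normalizing $Dt = 1$, and using $a^2 = t^2 = 0$, one computes $D(at) = a$, $D(bt) = b$, $D(ab) = at$, $D(b^2) = 2bt$, so that writing $ab = \alpha\omega$, $b^2 = \gamma\omega$ and applying $D$ once more gives $a = \alpha\, D(D\omega)$ and $2b = \gamma\, D(D\omega)$, which together with $\{a,b\}$ a basis and $a \ne 0$ is contradictory.

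The remaining sub-case $|d| \ge 2$ is the real obstacle: then $\pi_1(F) = \ZZ/d$ is finite, $F$ is a rational homology $5$-sphere, and $H^2(F;\ZZ)$, $H^3(F;\ZZ)$, $H^4(F;\ZZ)$ are all finite, so the rank arguments above collapse. Here I would instead run the Wang sequence with $\ZZ$- (or $\ZZ/p$-) coefficients for a prime $p \mid r$ (note $p\mid d$), and combine the identity $i^*(u)^2 = i^*(u^2)$ with the ring structure of lens-space-type $5$-manifolds and a count of the orders of the surviving groups $E_\infty^{0,4}$ and $E_\infty^{2,2}$ — whose product is $|H^4(M;\ZZ)| = r$ — equivalently with the linking form of $F$, to reach a contradiction. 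I expect this order/linking-form bookkeeping for non-simply-connected fibers to be the technically delicate step; the simply connected and $d \in \{0,\pm1\}$ cases are comparatively routine once the Wang/Leibniz mechanism above is in place.
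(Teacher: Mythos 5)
Your reduction to fiber bundles, the elimination of $\dim B=1$, and the Wang-sequence/Leibniz arguments for $B=\sph^3$ and for the subcases $d=0,\pm1$ over $\sph^2$ are essentially sound (and more elementary than the paper's route, which instead uses rational ellipticity and the classification of low-dimensional rationally elliptic spaces to cut down the possible bases). But the case you yourself flag as ``the real obstacle'' --- base $\sph^2$ with $|d|\geq 2$, so that $F$ is a rational homology $5$-sphere with finite nontrivial $\pi_1$ --- is a genuine gap, and the strategy you propose for it (counting orders of $E_\infty^{0,4}$ and $E_\infty^{2,2}$, linking forms, $\ZZ/p$-coefficient Wang sequences) cannot succeed. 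The reason is that such fibrations actually exist with the ``right'' cohomology: for instance $M'_k=(\sph^3\times\sph^5)/S^1$, with $S^1$ acting by the Hopf action on $\sph^3$ and with weight $k$ on $\sph^5$, is a compact simply connected $7$-manifold, rationally $\sph^2\times\sph^5$, with $H^2=\ZZ\langle u\rangle$ and $H^4$ finite cyclic generated by $u^2$, which fibers over $\sph^2$ with fiber the lens space $\sph^5/\ZZ_k$. So no amount of cohomological or linking-form bookkeeping on the Serre spectral sequence can produce a contradiction; the invariant that separates Eschenburg spaces from these $M'_k$ is $\pi_4$ (equal to $0$ for Eschenburg spaces, by $\pi_4(\SU(3))=0$, versus $\ZZ_2$ for $M'_k$), and your argument never invokes it.

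This is exactly how the paper closes the case: it pulls the bundle back along the Hopf map $\sph^3\to\sph^2$, passes to the universal cover to obtain a $2$-connected total space $P'$ fibering over $\sph^3$ with fiber $\tilde F$ the universal cover of $F$; it shows $\tilde F$ is a simply connected spin rational homology $5$-sphere, invokes Smale's classification (torsion of $H_2$ of a simply connected spin $5$-manifold is a sum $\bigoplus_i\ZZ_{k_i}\oplus\ZZ_{k_i}$) together with the Wang sequence (which forces $H^3(\tilde F,\ZZ)$ to be finite cyclic) to conclude $\tilde F\cong\sph^5$, and then reads off $\pi_4(M)=\pi_4(P')=\pi_4(\sph^3)=\ZZ_2$, contradicting $\pi_4=0$ for Eschenburg spaces. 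You would need to import this homotopy-theoretic input (or an equivalent) to complete your proof. A secondary point: your $d=\pm1$ subcase relies on $\operatorname{ord}(u^2)=r>1$, which you assert but do not justify for all Eschenburg spaces; the paper's argument does not need it.
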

\begin{maintheorem}\label{T:Bazaikin} Let $B^{13}$ be a compact manifold, homotopy equivalent to a Bazaikin space. Then there are no submersions from $B^{13}$ to a manifold of dimension $\leq 7$.
\end{maintheorem}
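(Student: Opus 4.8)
The plan is to rule out submersions $M^{13} \to B^b$ with $b \le 7$ by a dimension-counting argument on the fiber $F$, combined with the topology of Bazaikin spaces. A submersion $M \to B$ is (up to homotopy) a fibration $F \to M \to B$, with $\dim F = 13 - b \ge 6$. The key numerical fact about a Bazaikin space $B^{13}$ (and hence about any $M$ homotopy equivalent to it) is its cohomology ring: $H^*(B^{13};\ZZ)$ is generated by classes in degrees $2,4,\dots$ with a single generator $u \in H^2$, and $u$ satisfies $u^k \ne 0$ for $k$ up to some bound while the top-degree structure is that of a twisted projective-space-like manifold. In particular, $B^{13}$ has nontrivial rational cohomology only in a sparse set of even degrees up to degree $8$ or so, plus a class in degree $13$; it is \emph{not} a rational sphere, so Amann--Kennard does not directly apply, which is why a separate argument is needed.

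First I would fix a hypothetical fibration $F^f \to M^{13} \to B^b$ with $f = 13 - b \ge 6$, and analyze its Serre spectral sequence (over $\QQ$, and then over $\ZZ$ or $\ZZ/p$ as needed). Since $\pi_1(M)$ is finite (Bonnet--Myers / the known topology of $B^{13}$, which has finite, in fact cyclic, fundamental group), and $B^b$ with $b\le 7$ has restricted possibilities, I would first dispose of the small base dimensions $b \le 2$ trivially (the base would have the rational cohomology of a point, $S^1$, or a surface, forcing $F$ to carry essentially all the rational cohomology of $M$, which is impossible since $F$ would need a degree-$13$ class but $\dim F \le 11$ — more carefully, one uses that $M$ is orientable and the fiber-base dimensions are incompatible with Poincaré duality on $M$). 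For $3 \le b \le 7$ the main tool is the constraint that the rational cohomology of $M$ is the "product-like" output of the spectral sequence: $\chi(M) = \chi(F)\chi(B)$ when $\pi_1(B)$ acts trivially on $H^*(F;\QQ)$, and more importantly the Poincaré polynomials multiply if the spectral sequence collapses. Since $\chi(B^{13}) = 0$ (odd-dimensional) this alone is not enough, so I would instead push the class $u \in H^2(M;\QQ) \cong H^2(B^{13};\QQ)$: it must come from $H^2(B^b;\QQ)$ or $H^2(F;\QQ)$ via the edge homomorphisms, and in either case its powers being nonzero up to a high degree forces $\QQ$-cohomology in many even degrees on the base or the fiber, which for $b \le 7$ and $f \ge 6$ quickly collides with Poincaré duality (the base and fiber are themselves closed orientable manifolds) or with the known vanishing ranges.

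The sharper and probably decisive input — mirroring the proof of Theorem A for Eschenburg spaces — is a rigidity statement: \emph{any closed manifold homotopy equivalent to a Bazaikin space, which also admits a fibration with fiber of dimension $\ge 6$, must have a fiber and base whose rational cohomology rings are forced into a form that does not exist}. Concretely, I would show the base $B^b$ must then be rationally a complex/quaternionic-projective-like space or product thereof of total dimension $\le 7$ (so $\CP^2$, $\CP^3$, $S^4\times S^2$, or similar), the fiber $F$ must be $(13-b)$-dimensional with prescribed Betti numbers, and then invoke the classification of low-dimensional manifolds together with the specific ring structure of $H^*(B^{13})$ (in particular the relation satisfied by $u$ in top degrees, which is \emph{not} the relation $u^7 = 0$ of $\CP^6$) to derive a contradiction. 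The analysis splits by the parity of $b$: for $b$ odd the base contributes a rational sphere factor in the top degree and the argument reduces to a lower-dimensional instance; for $b$ even one gets a genuine even-dimensional base and the projective-space rigidity bites.

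The hard part, as in the Eschenburg case, will be handling the \emph{non-collapse} possibilities of the Serre spectral sequence and the case $\pi_1(B) \ne 1$ acting nontrivially on $H^*(F)$; there one cannot simply multiply Poincaré polynomials. I expect to circumvent this by working at a well-chosen prime $p$ (so that $H^*(B^{13};\ZZ/p)$ is as simple as possible and the $\ZZ/p$-spectral sequence is forced to collapse for degree reasons once $\dim F \ge 6$ and $\dim B \le 7$), and by using that $M$, $F$, and $B$ are all closed manifolds satisfying Poincaré duality — the total degree $13$ together with $f \ge 6$, $b \le 7$ leaves very little room. The residual small cases (e.g.\ $b = 6$, $f = 7$, where $F$ is a $7$-manifold that would itself have to look like a Bazaikin-type space fibering over a $6$-manifold) are handled by direct inspection of the ring structure, exactly paralleling the treatment of $7$-dimensional Eschenburg spaces in Theorem~\ref{T:Eschenburg}.
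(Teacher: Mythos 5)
There is a genuine gap, and it sits exactly where the actual difficulty of the theorem lies. Your proposal relies throughout on cohomological tools (Serre spectral sequence over $\QQ$ or $\ZZ/p$, Euler characteristics, Poincar\'e duality, ring structure of $H^*(B^{13})$). But a Bazaikin space is rationally homotopy equivalent to $\CP^2\times\sph^9$, so a fibration over $\CP^2$ with fiber a rational $\sph^9$ is \emph{completely unobstructed} at the level of rational (and essentially also mod $p$ and integral) cohomology: the spectral sequence of $\sph^9\to \CP^2\times\sph^9\to\CP^2$ collapses and reproduces the cohomology of $B^{13}$ on the nose. No amount of "projective-space rigidity" or collision with Poincar\'e duality will rule this case out, and your sketch never isolates it as the residual problem. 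The paper's proof has to leave cohomology entirely here: it lifts the fibration through a circle bundle $E\to B$ to a fibration $\tilde F\to E\to\sph^5$ with $E$ $4$-connected, uses Wall's classification of $3$-connected $9$-manifolds (the nondegenerate strongly skew-symmetric linking form on the cyclic group $H_4(\tilde F,\ZZ)$ forces it to vanish) to conclude $\tilde F\simeq\sph^9$, deduces $\pi_i(B)\simeq\pi_i(\sph^5)$ for $i=3,\dots,8$, and finally contradicts $\pi_8(B')=0\neq\ZZ_{24}=\pi_8(\sph^5)$ for Bazaikin spaces. That chain of ideas --- in particular the final appeal to a \emph{homotopy group} of the Bazaikin space rather than its cohomology --- is absent from your proposal and cannot be supplied by the methods you list.

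A secondary but real issue: your reduction of the possible bases to "projective-like spaces of dimension $\le 7$" is asserted, not derived. The paper gets this reduction from rational ellipticity (inherited by base and fiber via Proposition~\ref{P:rat-ell}), the finite Friedlander--Halperin list of rational homotopy types in dimension $\le 7$ (Table~\ref{Table}), and the formula \eqref{E:dimension-formula} expressing the cohomological dimension of the fiber in terms of the ranks $f_i=\dim\pi_i^\QQ(F)$; the mismatch between that formula and $\dim F=13-b$ kills all bases except (rational) $\CP^2$, with one exceptional case handled by an explicit relative minimal model. Without rational ellipticity and the dimension formula, generic spectral-sequence and Euler-characteristic arguments (note $\chi(F)\chi(X)$ cannot even equal $\chi(M)=0$ informatively, as you observe) leave far too many bases alive, e.g.\ $\sph^5$ or $\sph^2\times\sph^5$, which the paper must and does exclude by minimal-model computations rather than by counting.
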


It follows that, were to exist a counterexample to the Petersen-Wilhelm conjecture, it would need to come from a new positively curved manifold, not even homotopy equivalent to any of the known ones. This shows the complexity of the problem, since finding new examples has been, and continues to be, a highly challenging task.

The theorems above, together with the results in \cite{AK}, imply the following.

\begin{maincorollary}
Any nontrivial submersion from a compact manifold, homotopy equivalent to any of the known compact positively curved manifolds, satisfies the Petersen-Wilhelm's conjecture.
\end{maincorollary}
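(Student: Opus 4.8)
The plan is to deduce the statement from Theorems~\ref{T:Eschenburg} and~\ref{T:Bazaikin} together with Theorem~A of~\cite{AK}, by running through the list of simply connected examples recalled in the introduction. First I would reduce to that list: if $M$ is compact and homotopy equivalent to a known positively curved manifold $N$, then by Bonnet-Myers $N$ has finite fundamental group, hence so does $M$, and a homotopy equivalence $M\to N$ lifts to a homotopy equivalence $\widetilde{M}\to\widetilde{N}$ between the (compact) universal covers, so $\widetilde{M}$ is homotopy equivalent to one of the listed simply connected manifolds. Composing a given nontrivial submersion $M\to B$ with fiber $F$ with the covering $\widetilde{M}\to M$ produces a submersion $\widetilde{M}\to B$ with the same base and with fiber a covering space of $F$, hence of the same dimension; so it suffices to prove $\dim F<\dim B$ when $M$ is itself homotopy equivalent to one of the simply connected examples, and we may moreover assume $\dim F\ge 1$, as otherwise $\dim F=0<\dim B$ because the submersion is nontrivial.

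Next I would split into three cases according to which simply connected example $M$ is homotopy equivalent to. In the first case $M$ is homotopy equivalent to a compact rank one symmetric space, to one of $W^6,W^{12},W^{24}$, to the Eschenburg space $E^6$ (rationally homotopy equivalent to $W^6$), or to $B^7$ or $P_2$ (rational spheres); since a homotopy equivalence is in particular a rational homotopy equivalence, $M$ is then rationally homotopy equivalent to one of the spaces covered by Theorem~A of~\cite{AK}, and that theorem yields $\dim F<\dim B$ immediately. In the second case $M$ is homotopy equivalent to a $7$-dimensional Eschenburg space, a family which contains all the Aloff-Wallach spaces $W_{p,q}^7$: here $\dim M=7$, so if $\dim F\ge\dim B$ then, using $\dim F+\dim B=7$ and nontriviality, we get $1\le\dim B\le 3$, contradicting Theorem~\ref{T:Eschenburg}. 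In the third case $M$ is homotopy equivalent to a Bazaikin space, a family which contains $B^{13}$: here $\dim M=13$, so if $\dim F\ge\dim B$ then $1\le\dim B\le 6\le 7$, contradicting Theorem~\ref{T:Bazaikin}. In the last two cases we conclude $\dim F<\dim B$, which finishes the argument.

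I do not expect any genuine obstacle at this stage: all of the actual work sits inside Theorems~\ref{T:Eschenburg} and~\ref{T:Bazaikin} and inside~\cite{AK}. The only thing requiring care is the bookkeeping: one must check that the enumeration recalled in the introduction exhausts the simply connected compact positively curved manifolds and that each of them falls into exactly one of the three cases above --- in particular that the Aloff-Wallach spaces are subsumed by the $7$-dimensional Eschenburg family and $B^{13}$ by the Bazaikin family --- and that the numerical bounds ``$\le 3$'' in dimension $7$ and ``$\le 7$'' in dimension $13$ are exactly what the inequality $\dim F<\dim B$ requires.
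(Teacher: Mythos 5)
Your proposal is correct and follows essentially the same route as the paper, which derives the Corollary by combining Theorem~A of \cite{AK} (covering the manifolds rationally homotopy equivalent to the compact rank one symmetric spaces, $W^6,W^{12},W^{24}$, $E^6$, $B^7$ and $P_2$) with Theorems~\ref{T:Eschenburg} and~\ref{T:Bazaikin} for the remaining Eschenburg and Bazaikin cases; your only addition is to write out explicitly the routine reduction to the simply connected case and the arithmetic showing that $\dim F\geq\dim B$ would force $\dim B\leq 3$ (resp.\ $\leq 6$), which the paper leaves implicit.
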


Observe that the bounds in Theorems \ref{T:Eschenburg} and \ref{T:Bazaikin} are optimal, in the sense that the Eschenburg and the Bazaikin spaces admit submersions to $\CP^2$ and $\CP^4$ respectively. 

The proofs of Theorems A and B are remarkably similar, and follow from more general results. Recall that the Eschenburg and the Bazaikin spaces split rationally as $\mathbb{S}^2\times \mathbb{S}^5$ and $\CP^2\times \mathbb{S}^9$ respectively (cf. \cite{BK}). For each of these spaces, we apply tools from rational homotopy theory to rule out all potential submersions except those over $\mathbb{S}^2$ and $\CP^2$ respectively. Ruling out these last possibilities essentially constitutes the mathematical core of this note.

The paper is organized as follows. In Section \ref{S:prelim} we recall the main results from rational homotopy theory that will be used later on. Then in Sections \ref{S:Eschenburg} and \ref{S:Bazaikin} we prove Theorems \ref{T:Eschenburg} and \ref{T:Bazaikin}, respectively.

\begin{ack}
This project grew out of a conversation with Karsten Grove, while the first author was visiting the University of Notre Dame. The first author wishes to thank the University of Notre Dame for the hospitality during his stay.
\end{ack}

\section{Basics on rational homotopy theory}\label{S:prelim}

In this section we recall the basic facts about rational homotopy theory, that will be used later on. For more details on the theory, the reader is referred to \cite{FHT} and \cite{AP}. All algebras and vector spaces in this section are over the field $\QQ$ of rational numbers.

\subsection{Differential graded algebras}
Recall that a \emph{differential graded algebra} (also called $dga$) consists of a graded commutative algebra $A=\oplus_i A_i$, together with a degree-1 map (a \emph{differential}) $d:A\to A$ that satisfies the Leibniz rule, and $d\circ d=0$. A morphism of dga's $\varphi:(A,d)\to (B,d)$ is a graded algebra morphism $\varphi:A\to B$, commuting with the differentials.

Since $d\circ d=0$, it is possible to define the cohomology of a dga $H^*(A,d)$ As the homology of the complex $(A,d)$, and one defines the \emph{cohomological dimension} of $(A,d)$ to be the largest integer $n$ (possibly $\infty$) such that $H^n(A,d)\neq 0$.

\subsection{Sullivan functor}
The \emph{Sullivan functor} $A_{PL}$ is a contravariant functor that associates, to any path-connected pointed space $X$ (resp. to any map $f:X\to Y$ between path connected pointed spaces) a dga $(A_{PL}(X), d)$ given by the rational PL-forms on the singular simplices of $X$ (resp. a morphism $f^*:(A_{PL}(Y),d)\to (A_{PL}(X),d)$ of dga's obtained by pull-back of PL-forms). The Sullivan functor has the property that, for any path connected pointed space $X$, one has
\[
H^*(A_{PL}(X),d)\simeq H^*(X,\QQ).
\]

\subsection{Sullivan algebras} Given a $\ZZ$-graded vector space $V$, let $V^{even}$ (resp. $V^{odd}$) denote the subspace of $V$ spanned by the elements of even (resp. odd) degree, and define the free commutative graded algebra $\Lambda V$ by
\[
\Lambda V= \wedge V^{odd}\otimes \QQ[V^{even}],
\]
where $\wedge V^{odd}$ denotes exterior algebra and $\QQ[V^{even}]$ denotes polynomial algebra. A \emph{Sullivan algebra} is a dga of the form $(\Lambda V, d)$, satisfying
\[
d(\Lambda V)\subseteq \Lambda^+V\cdot \Lambda^+V
\]
where $\Lambda^+V\subset \Lambda V$ denotes the ideal generated by the elements of strictly positive degree.
\\

\subsection{Nilpotent spaces, and minimal model}\label{S:min-model}
Let now $X$ be a CW-complex. Recall that $X$ is called \emph{nilpotent} if $\pi_1(X)$ is nilpotent, and the action of $\pi_1(X)$ on $\pi_i(X)$ is nilpotent for all $i\geq 1$. For a nilpotent CW-complex $X$, there exists a space $X_\QQ$, called the \emph{rationalization} of $X$, such that $H^*(X_\QQ,\ZZ)\simeq H^*(X;\QQ)$, together with a map $X\to X_\QQ$ inducing isomorphisms $\pi_k(X)\otimes \QQ\to \pi_k(X_\QQ)\otimes \QQ$. Two nilpotent CW-complexes are \emph{rationally homotopy equivalent}, if the corresponding rationalizations are homotopy equivalent. Rationally homotopy equivalent spaces have, in particular, isomorphic rational homotopy groups $\pi_i^\QQ(X):=\pi_i(X)\otimes \QQ$ and isomorphic rational cohomology rings.

Given a nilpotent CW-complex, one can also define a Sullivan algebra $(\Lambda V_X,d)$ and a morphism $\varphi:(\Lambda V_X,d)\to (A_{PL}(X),d)$ of dga's, such that the induced map in cohomology $\varphi_*:H^*(\Lambda V_X,d)\to H^*(A_{PL}(X),d)\simeq H^*(X,\QQ)$ is an isomorphism. The algebra $(\Lambda V_X,d)$ is called \emph{Sullivan model} (also \emph{minimal model}) of $X$, and it satisfies the following remarkable properties:
\begin{itemize}
\item The minimal model is unique up to isomorphism.
\item If $\pi_1(X)$ is abelian, there are isomorphisms $V_X^i\simeq \pi_i^\QQ(X)$ for all $i\geq 1$, where $V_X^i\subset V_X$ is the subspace of degree $i$, and $\pi_i^\QQ(X)$ denotes $\pi_i(X)\otimes \QQ$ (cf. \cite[Theorem (2.3.7)]{AP}).
\item The minimal model determines $X$ up to rational homotopy equivalence: if $Y$ is another nilpotent CW-complex, its minimal model is isomorphic to $(\Lambda V_X,d)$ if and only if $X,Y$ are rationally homotopy equivalent.
\end{itemize}

\subsection{Minimal models and fibrations}\label{SS:relat.model}
Minimal models behave nicely with respect to fibrations. In particular, given a fibration $F\to M\to X$ of nilpotent spaces, where $F$ and $X$ have minimal models $(\Lambda V_F,d_F)$ and $(\Lambda V_X, d_X)$ respectively, then there exists a dga $(\Lambda V_F\otimes \Lambda V_X,D)$ and a morphisms of dga's $(\Lambda V_F \otimes \Lambda V_X,D)\to (A_{PL}(M),d)$ inducing isomorphism in cohomology (see \cite[Sec. 15(a)]{FHT}). This model comes together with dga maps

$$(\Lambda V_X,d_X)\stackrel{1\otimes id}{\longrightarrow} (\Lambda V_F\otimes \Lambda V_X,D)\stackrel{id\otimes \epsilon}{\longrightarrow} (\Lambda V_F,d_F),$$
where $\epsilon:\Lambda V_X\to \QQ$ sends $\Lambda^+V_X$ to $0$, which induce in cohomology the expected maps $H^*(X,\QQ)\to H^*(M,\QQ)\to H^*(F,\QQ)$.
 The dga $(\Lambda V_F \otimes \Lambda V_X,D)$ is not necessarily a minimal model, but it is called \emph{relative minimal model} of $M$.
\subsection{Rationally elliptic spaces}

Let $X$ be a nilpotent CW-complex, with $\pi_1(X)$ abelian. Then $X$ is called \emph{rationally elliptic}, if
\[
\sum_{i=0}^\infty\dim H^i(X,\QQ)<\infty,\qquad \sum_{i=1}^\infty\dim \pi_i^\QQ(X)\leq \infty.
\]

Friedlander and Halperin proved in \cite{FH} that rationally elliptic spaces have very restrictive rational homotopy groups. In particular, the list of possible rational groups of rationally elliptic spaces of dimension $\leq 7$, computed in \cite{Pav} (for dimension $\leq 6$) and \cite{DeV} (in dimension 7) is exceptionally limited, and it is shown in Table \ref{Table} at the end of this paper.

All the known compact simply connected manifolds with non-negative curvature are rationally elliptic (and in fact, the so-called Bott-Grove-Halperin conjecture states that this should always be the case). In particular, we will be dealing with submersions $\pi:M\to X$ where $M$ is a compact, simply connected, rationally elliptic manifold.

The following result is well known, and it will be crucial to rule out most submersions from Eschenburg and Bazaikin biquotients.

\begin{proposition}\label{P:rat-ell} Suppose $M$, $X$ are compact simply connected manifolds, and $\pi:M\to X$ is a submersion with fiber $F$. Then:

\begin{enumerate}[a.]
\item $F$ is a compact nilpotent space, with abelian fundamental group. 
\item If $M$ is rationally elliptic, so are $X$ and $F$, and the dimension of $F$ can be computed from the ranks $f_i=\dim\pi_i^\QQ(F)$, by
\begin{align}\label{E:dimension-formula}
\dim F= \sum_{i=1}^\infty(2i+1)f_{2i+1}-\sum_{i=1}^\infty(2i-1)f_{2i}.
\end{align}
\end{enumerate}
\end{proposition}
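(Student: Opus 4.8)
The plan is to reduce both parts to classical properties of fibrations together with the structure theory of rationally elliptic spaces. For part (a), the first point is that $\pi$ is automatically a fiber bundle: as a submersion from the compact manifold $M$ it is proper, so Ehresmann's theorem makes it locally trivial. In particular $F\to M\to X$ is a fibration, and $F$, being the preimage of a point of the compact manifold $M$, is a closed submanifold and hence a compact manifold. Feeding $\pi_1(M)=\pi_1(X)=0$ into the homotopy exact sequence gives $\pi_0(F)=\ast$ (so $F$ is connected) and an exact sequence $\pi_2(X)\to\pi_1(F)\to 0$; thus $\pi_1(F)$ is a quotient of the abelian group $\pi_2(X)$, and in particular is abelian. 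Finally, $F$ is nilpotent by the standard fact that the fiber of a fibration over a simply connected base whose total space is nilpotent (here, simply connected) is itself nilpotent \cite{FHT}.

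For part (b), observe first that $\dim H^*(X;\QQ)$ and $\dim H^*(F;\QQ)$ are automatically finite because $X$ and $F$ are compact manifolds; hence rational ellipticity of $X$ and of $F$ amounts only to finiteness of their total rational homotopy. This is exactly the statement that rational ellipticity of the total space of a fibration of nilpotent spaces is inherited by the base and by the fiber \cite{FHT}. (Alternatively one can argue from the homotopy exact sequence: ellipticity of $M$ forces $\pi_n^\QQ(M)=0$ for all large $n$, whence $\pi_{n+1}^\QQ(X)\cong\pi_n^\QQ(F)$ in large degrees; by the dichotomy theorem this reduces matters to ruling out the case where $X$ and $F$ are both rationally hyperbolic, which one does via the growth estimates of \cite{FH}.)

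It remains to establish \eqref{E:dimension-formula}. Since $F$ is a closed manifold it satisfies Poincar\'e duality, so $\dim F$ equals the formal dimension of $F$, i.e.\ the largest degree in which $H^*(F;\QQ)\neq 0$. Writing $f_i=\dim\pi_i^\QQ(F)=\dim V_F^i$ for the dimensions of the generators of the minimal model of $F$, the Friedlander--Halperin formula \cite{FH} computes the formal dimension of a rationally elliptic space as $\textstyle\sum_{k\ \mathrm{odd}}k\,f_k-\sum_{k\ \mathrm{even}}(k-1)f_k$, which is the right-hand side of \eqref{E:dimension-formula} (the degree-one contribution $f_1$ being absent when $F$ is $1$-connected, which is the situation in the applications). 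I will not reproduce the proof of this formula: one passes to the pure Sullivan algebra associated to the minimal model of $F$, which has the same formal dimension, and identifies its top nonzero cohomology class as the product of a fundamental class of the finite-dimensional Poincar\'e duality algebra $\QQ[V_F^{even}]/I$ --- with $I$ generated by a regular sequence built from the differentials of suitable odd generators --- with the product of the remaining odd generators.

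The only genuinely non-formal ingredient is the Friedlander--Halperin structure theory of rationally elliptic spaces, on which both the descent of ellipticity to $X$ and $F$ and the dimension formula rely; the rest --- Ehresmann's theorem, the homotopy exact sequence, and the nilpotency of fibers over simply connected bases --- is routine, so the main difficulty is really just to assemble and invoke these standard results correctly.
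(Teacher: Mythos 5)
Your part (a) coincides with the paper's argument (Ehresmann, the homotopy exact sequence giving $\pi_1(F)$ as a quotient of $\pi_2(X)$, and the nilpotency of fibers of fibrations with nilpotent total space), and your treatment of the dimension formula via the Friedlander--Halperin formal-dimension formula is also the paper's route. You are right to flag the degree-one term: as printed, the odd sum in \eqref{E:dimension-formula} starts in degree $3$, so the formula is off by $f_1$ whenever $\pi_1(F)\otimes\QQ\neq 0$ --- and, contrary to your parenthetical, such cases do arise later in the paper (e.g.\ $f_1=f_2=f_5=1$ in Proposition \ref{P:No-S2}), where the authors avoid the formula and argue with relative models instead. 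The one substantive divergence is in the only non-routine step, the descent of rational ellipticity to $X$ and $F$. You outsource it to a citation (``ellipticity of the total space is inherited by base and fiber''), whereas the paper proves it: it invokes the rational dichotomy (ellipticity of $M$ is equivalent to polynomial growth of the loop-space Betti numbers $b_i(\Omega M)$, cf.\ \cite{FHTII}), runs the Leray--Serre spectral sequence of the fibration $\Omega M\to\Omega X\to F$ to bound $b_i(\Omega X)$ by a polynomial, concludes that $X$ is elliptic, and only then deduces ellipticity of $F$ from the long exact sequence. If the inheritance statement can be pinned down in the literature in the generality needed here (compact nilpotent fiber, not necessarily simply connected), your proof is complete and shorter; the fact that the authors chose to prove it suggests it is not a clean black-box citation.

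Your parenthetical fallback argument, however, does not work as sketched and should not be relied on. From $\pi_{n+1}^\QQ(X)\cong\pi_n^\QQ(F)$ in large degrees you only learn that $X$ and $F$ are elliptic or hyperbolic \emph{together}; in the doubly hyperbolic case the dichotomy theorem gives exponential growth of both sequences of ranks, which is perfectly consistent with the isomorphisms, and the growth estimates of \cite{FH} concern elliptic spaces, so they produce no contradiction. Excluding the doubly hyperbolic case genuinely requires tying the homotopy of $X$ back to that of $M$, which is exactly what the $\Omega M\to\Omega X\to F$ spectral-sequence argument in the paper accomplishes. So either supply a precise reference for the inheritance of ellipticity or reproduce that argument; as written, this single step is the gap in your proposal.
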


\begin{proof}
a. Since $\pi:M\to X$ is a submersion between compact manifolds, it is also a locally trivial fiber bundle (in particular a fibration). The generic fiber, denoted by $F$, is a compact manifold and, being the fiber of a fibration from a nilpotent space, it is itself nilpotent (cf. \cite[Theorem 2.2]{HMR}). Moreover, from the long exact sequence in homotopy for the fibration $F\to M\to X$ it follows that $\pi_1(F)$ is abelian.
\\

b. It follows from the previous point that the spaces $M$, $X$, $F$ admit minimal models $(\Lambda V_M,d)$, $(\Lambda V_X,d)$, $(\Lambda V_F,d)$ respectively. Since $M$, $X$ are simply connected and $\pi_1(F)$ is abelian, it follows from Section \ref{S:min-model} that $V_M^i$, (resp. $V_X^i$, $V_F^i$) is isomorphic to $\pi_i^\QQ(M)$ (resp. $\pi_i^\QQ(X)$, $\pi_i^\QQ(F)$) for all $i\geq 1$.

Recall that, by the rational dichotomy (cf. for example Theorem A in \cite{FHTII}), a manifold $M$ is rationally elliptic if and only if the rational Betti numbers $b_i(\Omega M)$ of the loop space of $M$ grow at most polynomially in $i$. The $E_2$-page of the Leray-Serre spectral sequence for the fibration $\Omega M\to \Omega X\to F$ is given by $E_2^{p,q}=H^p(\Omega M,\QQ)\otimes H^q(F,\QQ)$, and therefore
\[
\dim H^i(\Omega X,\QQ)\leq \sum_{p-q=i} \dim H^p(\Omega M,\QQ) \dim H^q(F,\QQ)\leq C\sum_{k=0}^{\dim F}b_{i-k}(\Omega M)
\]
where $C=\max_{j} b_j(F)$. Because $M$ is rationally elliptic, by Proposition 33.9 in \cite{FHT} it follows that
\[
\sum_{k=0}^{\dim F}b_{i-k}(\Omega M)\leq B i^m
\]
for some constant $B$ and some positive integer $m$. Therefore, $b_i(\Omega X)\leq BC\, i^m$, and $\sum_{i=1}^k b_i(\Omega X)\leq BC\, k^{m+1}$, so $X$ is rationally elliptic. In particular, $\sum_i\dim(\pi_i^\QQ(X))$ is finite. From the long exact sequence in rational homotopy, the same holds for $F$ and thus $F$ is rationally elliptic.

Finally, since $F$ is rationally elliptic, Equation (2.2) in \cite{FH} gives that the largest integer $n_F$ such that $H^{n_F}(\Lambda V_F,d)=H^{n_F}(F,\QQ)\neq0$ can be computed from the $f_i=\dim V_F^i=\dim \pi_i^\QQ(F)$, as
\[
n_F= \sum_{i=1}^\infty(2i+1)f_{2i+1}-\sum_{i=1}^\infty(2i-1)f_{2i}.
\]
Since $F$ is compact and orientable, $n_F=\dim F$ and Equation \eqref{E:dimension-formula} follows.

\end{proof}

\section{Submersions from 7-dimensional examples}\label{S:Eschenburg}

The goal of this section is to prove Theorem \ref{T:Eschenburg}. This result will follow from the more topological Propositions \ref{P:Submersion-dim-7} and \ref{P:No-S2} below.

\begin{proposition}\label{P:Submersion-dim-7}
Let $M^7$ be a compact, simply connected manifold that is rationally homotopy equivalent to $\mathbb{S}^2\times \mathbb{S}^5$. If $\pi:M\to X$ is a submersion onto a simply connected compact manifold, then $X$ has the rational homotopy groups of either $\mathbb{S}^2, \CP^2, \mathbb{S}^5$, or $\mathbb{S}^2\times\CP^2$.
\end{proposition}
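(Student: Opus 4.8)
The plan is to use Proposition \ref{P:rat-ell} together with the classification of rational homotopy groups of low-dimensional rationally elliptic spaces (Table \ref{Table}), applied to both the base $X$ and the fiber $F$. Since $M$ is rationally homotopy equivalent to $\mathbb{S}^2\times\mathbb{S}^5$, it is rationally elliptic with $\pi_2^\QQ(M)\cong\QQ$, $\pi_5^\QQ(M)\cong\QQ$ (and $\pi_3^\QQ(M)\cong\QQ$, coming from the minimal model generator that kills the square of the degree-$2$ class), and all other rational homotopy groups vanishing. By Proposition \ref{P:rat-ell}, both $X$ and $F$ are compact simply connected (note $\pi_1(F)$ is abelian and $F$ is a quotient of homotopy groups sitting in the long exact sequence, so $F$ is at least rationally simply connected) rationally elliptic manifolds, and $\dim X+\dim F=7$.

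First I would organize the argument by $\dim F\in\{0,1,\dots,6\}$, equivalently $\dim X\in\{7,6,\dots,1\}$. The long exact sequence in rational homotopy for $F\to M\to X$,
\[
\cdots\to\pi_i^\QQ(F)\to\pi_i^\QQ(M)\to\pi_i^\QQ(X)\to\pi_{i-1}^\QQ(F)\to\cdots,
\]
forces $\dim\pi_i^\QQ(X)+\dim\pi_i^\QQ(F)\ge\dim\pi_i^\QQ(M)$ in each degree (more precisely the alternating-sum/exactness constraints), and since $\pi_i^\QQ(M)=0$ except for $i=2,3,5$, the total rational homotopy of $X$ and of $F$ is very constrained: in each even degree $\ne 2$ and each odd degree $\notin\{3,5\}$, any class in $\pi_i^\QQ(X)$ must be matched by a class in $\pi_{i-1}^\QQ(F)$ and vice versa. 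Running through Table \ref{Table} for each admissible dimension of $X$ (using that $X$ is rationally elliptic of dimension $\le 7$) and cross-checking against the dimension formula \eqref{E:dimension-formula} for $F$ and the exact sequence, I expect all cases to collapse except $X\simeq_\QQ\mathbb{S}^2$ (with $F\simeq_\QQ\mathbb{S}^5$), $X\simeq_\QQ\CP^2$ (with $F\simeq_\QQ\mathbb{S}^2\times\mathbb{S}^2$ or a rational $\mathbb{S}^3$-bundle... which must be ruled down to the correct $F$), $X\simeq_\QQ\mathbb{S}^5$ (with $F\simeq_\QQ\mathbb{S}^2$), and $X\simeq_\QQ\mathbb{S}^2\times\CP^2$ (with $F$ a point, i.e. the trivial "submersion" onto $M$ itself up to rational equivalence — though here one must check $\dim F=0$ really does force this). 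Cases like $\dim X=7$ are excluded because a rational-elliptic $7$-manifold fibering with $0$-dimensional fiber would have to be rationally $\mathbb{S}^2\times\mathbb{S}^5$ itself, which is on the allowed list only as $\mathbb{S}^2\times\CP^2$? — no: $\mathbb{S}^2\times\mathbb{S}^5$ is already of the right form, so $\dim F=0$ gives $X\simeq_\QQ M$, which is \emph{not} one of the four listed spaces, so this case must be excluded by observing that $\dim F=0$ means $F$ is a finite set and the submersion is a covering, forcing $X=M$ up to finite cover — and a simply connected $X$ rationally $\mathbb{S}^2\times\mathbb{S}^5$ is allowed; hence I should be careful and the statement presumably intends $\pi$ nontrivial, so I would instead note $\mathbb{S}^2\times\mathbb{S}^5$ has the rational homotopy of $\mathbb{S}^2\times\CP^2$? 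No. I will double check the exact phrasing, but the mechanism is clear: eliminate each forbidden base/fiber pair by a parity or dimension-count contradiction with Table \ref{Table} and \eqref{E:dimension-formula}.

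The main obstacle will be the bookkeeping in the cases where $\dim X$ is $3$, $4$, or $5$: here both $F$ and $X$ have nontrivial rational homotopy spread across several degrees, Table \ref{Table} offers several candidates (e.g. in dimension $4$, $X$ could be rationally $\mathbb{S}^4$ or $\CP^2$ or $\mathbb{S}^2\times\mathbb{S}^2$), and I must eliminate the wrong ones using that $\pi_i^\QQ(M)$ is concentrated in degrees $2,3,5$. For instance, if $X\simeq_\QQ\mathbb{S}^4$ then $\pi_4^\QQ(X)\cong\QQ$ must inject into $\pi_3^\QQ(F)$, and $\pi_7^\QQ(X)\cong\QQ$ must map into $\pi_6^\QQ(F)$; chasing these through the exact sequence and matching against the dimension formula for the $3$-dimensional $F$ should yield a contradiction, but it requires care. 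Similarly $\mathbb{S}^2\times\mathbb{S}^2$ as base is excluded because its two degree-$2$ classes cannot both lift compatibly given $\dim\pi_2^\QQ(M)=1$ and the structure of the fiber. Once all forbidden pairs are eliminated, only $\mathbb{S}^2$, $\CP^2$, $\mathbb{S}^5$, and $\mathbb{S}^2\times\CP^2$ survive as possible rational homotopy types of $X$, which is the claim.
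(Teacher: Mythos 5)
Your overall strategy coincides with the paper's: invoke Proposition~\ref{P:rat-ell} to get rational ellipticity of $X$ and $F$, read off the candidate ranks $c_i$ from Table~\ref{Table}, compute the $f_i$ from the long exact sequence in rational homotopy, and eliminate cases by comparing the output of~\eqref{E:dimension-formula} with $\dim F = 7-\dim X$. However, your proposed elimination mechanism --- ``a parity or dimension-count contradiction with Table~\ref{Table} and~\eqref{E:dimension-formula}'' --- is not sufficient, and this is a genuine gap rather than mere bookkeeping. Concretely, take $X$ with the rational homotopy groups of $\mathbb{S}^3$. One branch of the long exact sequence (the one where $\pi_3^\QQ(M)\to\pi_3^\QQ(X)$ is an isomorphism) gives $f_2=f_5=1$ and nothing else, so $F$ has the rational homotopy groups of $\CP^2$; then~\eqref{E:dimension-formula} returns $5-1=4$, which \emph{equals} $\dim F=7-3=4$. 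No dimension count or parity argument rules this out. The paper eliminates it by a different tool: it forms the relative minimal model $(\Lambda(y_2,x_3,y_5),D)$ of the fibration (Section~\ref{SS:relat.model}), observes that $D(y_2)=D(x_3)=0$ for degree reasons, so $y_2^2$ survives to a nonzero class in $H^4(M,\QQ)$, contradicting $H^4(\mathbb{S}^2\times\mathbb{S}^5,\QQ)=0$. Your plan never brings the cohomology ring of $M$ (as opposed to its rational homotopy groups and the dimension of $F$) into play, so it cannot close this case.

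A secondary, smaller issue: your discussion of the $\dim F=0$ case goes in circles. The proposition is applied only to nontrivial submersions (as in the Corollary), and the paper simply works with $\dim X\le 6$; a covering of a simply connected manifold is a diffeomorphism, so that case carries no content. You correctly sense the tension but should resolve it by restricting to $\dim X\le 6$ at the outset rather than leaving it open. The remaining cases in Table~\ref{Table} do all fall to the dimension count as you expect (e.g.\ for $X\simeq_\QQ\mathbb{S}^4$ one finds $f_2=1$, $f_3=2$, $f_5=f_6=1$, giving formal dimension $5\neq 3$), so once you add the relative-model argument for the $\mathbb{S}^3$ case your proof would be complete.
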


\begin{proof}

Since $M$ is rationally homotopy equivalent to $\mathbb{S}^2\times \mathbb{S}^5$, in particular it is rationally elliptic, and by Proposition \ref{P:rat-ell}, the same holds for $X$ and $F$.
Because $X$ is simply connected, rationally elliptic and has $\dim X\leq 6$, there is a finite list of possibilities for the ranks $c_i=\dim\pi_i^\QQ(X)$ of the rational homotopy groups of $X$. We list these possibilities in Table \ref{Table}, together with the corresponding dimension $n$ of $X$, computed using Equation \eqref{E:dimension-formula}.

Suppose that the rational homotopy groups of $X$ are not the ones of $\mathbb{S}^2,\CP^2,\mathbb{S}^5$ or $\mathbb{S}^2\times\CP^2$. We will show by contradiction that such a space cannot occur.

For each of these possible $X$, one can easily compute the corresponding rational homotopy groups $\pi_i^\QQ(F)=\QQ^{f_i}$ of $F$ via the long exact sequence in (rational) homotopy for the fibration $F\to M\to X$. For instance, suppose that $X$ has the rational homotopy groups of $\mathbb{S}^3$. In this case the fiber $F$ is 4-dimensional, and the non-zero ranks $f_i=\dim \pi_i^\QQ(F)$ are either:
\begin{enumerate}
\item $f_2=f_5=1$, or
\item $f_3=f_5=1$, $f_2=2$.
\end{enumerate}

In the first case, the models for $X$ and $F$ would be of the type $(\Lambda(x_3),d)$ and $(\Lambda(y_2, y_5), d)$ respectively, and thus a relative model for $M$ (cf. Section \ref{SS:relat.model}) would be $(\Lambda(y_2,x_3,y_5), D)$ with $D(x_3)=0$. In this case however, $y_2^2$ would represent a nonzero element of $H^4(M,\QQ)$, in contradiction with the fact that $H^4(M,\QQ)=H^4(\mathbb{S}^2\times \mathbb{S}^5,\QQ)=0$.

In the second case, using Equation \eqref{E:dimension-formula}, one obtains that the cohomological dimension of $H^*(\Lambda V_F,d)\simeq H^*(F,\QQ)$ would be 6, in contradiction with the fact that the cohomological dimension of compact orientable manifolds agrees with the usual dimension.

In all remaining cases for the rational homotopy groups of $X$, one obtains a contradiction as in the second case in the example above. Namely, one computes all possible $f_i=\dim \pi_i^\QQ(F)$, and uses Equation \eqref{E:dimension-formula} to compute the cohomological dimension of $H^*(\Lambda V_F,d)\simeq H^*(F,\QQ)$, which never agrees with the actual dimension of $F$.

\end{proof}

Notice that all the possibilities listed above can occur, as quotients of $\mathbb{S}^2\times \mathbb{S}^5$.
%

\begin{proposition} \label{P:No-S2}
Let $M^7$ be as in Proposition \ref{P:Submersion-dim-7} and assume in addition that $\pi_4(M)\neq \ZZ_2$.
Then $M$ does not admit submersions onto any manifold of dimension $\leq 3$.
\end{proposition}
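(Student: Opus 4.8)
The plan is to reduce, via Proposition~\ref{P:Submersion-dim-7}, to a single fibre bundle over $\mathbb{S}^2$, and then to kill it by combining the Serre spectral sequence with the homotopy exact sequence. A submersion $\pi\colon M\to X$ between compact manifolds is a locally trivial fibre bundle with compact fibre $F$, and from the homotopy sequence of $F\to M\to X$ together with $\pi_1(M)=0$ the group $\pi_1(X)$ is in bijection with the finite set $\pi_0(F)$, hence finite; pulling the bundle back along the universal covering $\overline X\to X$ gives a connected covering of $M$, which is trivial as $\pi_1(M)=0$, so $M$ is itself the total space of a bundle over the compact simply connected manifold $\overline X$, with $\dim\overline X\le 3$. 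By Proposition~\ref{P:Submersion-dim-7} the base $\overline X$ has the rational homotopy groups of $\mathbb{S}^2,\CP^2,\mathbb{S}^5$ or $\mathbb{S}^2\times\CP^2$, which by~\eqref{E:dimension-formula} have dimensions $2,4,5,6$; so $\overline X=\mathbb{S}^2$ is the only option with $\dim\le 3$ (the base being a point is the trivial submersion). The fibre $F^5$ is then simply connected: its fibrewise universal cover $\widehat M\to\mathbb{S}^2$ has connected total space (fibre $\widetilde F$), while $\widehat M\to M$ is a connected covering, hence trivial, forcing $\pi_1(F)=1$.

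Next I would pin down $F$ rationally by looking at $\pi^*\colon H^2(\mathbb{S}^2;\QQ)\to H^2(M;\QQ)$, a map of one-dimensional spaces. If $\pi^*=0$, the generator of $H^2(\mathbb{S}^2)$ transgresses from $H^1(F;\QQ)$, so $\dim\pi_1^\QQ(F)=1$ by the homotopy sequence; the dimension formula~\eqref{E:dimension-formula} then forces $F\simeq_\QQ\mathbb{S}^1\times\CP^2$, and running the (multiplicative) $d_2$ of the Serre spectral sequence one finds $\sum_i\dim_\QQ H^i(M;\QQ)\ne 4=\sum_i\dim_\QQ H^i(\mathbb{S}^2\times\mathbb{S}^5;\QQ)$ in every case, a contradiction. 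Hence $\pi^*$ is an isomorphism; then the homotopy sequence together with $\pi_*^\QQ(M)\cong\pi_*^\QQ(\mathbb{S}^2\times\mathbb{S}^5)$ gives $\pi_i^\QQ(F)=0$ for $i\ne 5$ and $\pi_5^\QQ(F)=\QQ$, i.e.\ $F$ is a closed simply connected rational homology $5$-sphere.

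The core is then to contradict $\pi_4(M)\ne\ZZ_2$. Such an $F$ has $\pi_2(F)=H_2(F)$ finite, its $3$-skeleton is the Moore space $M(H_2(F),2)$, and so (Whitehead's exact sequence, using $H_3(F)=H_4(F)=0$) $\pi_3(F)\cong\Gamma(H_2(F))$, with $\pi_4(F)$ the quotient of $\pi_4(M(H_2(F),2))$ by the attaching class of the top cell. Feeding this into the tail $\pi_5(\mathbb{S}^2)\xrightarrow{\partial}\pi_4(F)\to\pi_4(M)\to\pi_4(\mathbb{S}^2)=\ZZ_2\xrightarrow{\partial}\pi_3(F)\to\pi_3(M)\to\pi_3(\mathbb{S}^2)\xrightarrow{\partial}\pi_2(F)$ of the homotopy sequence of $F\to M\to\mathbb{S}^2$, the aim is to show that both the fibre inclusion $\pi_4(F)\to\pi_4(M)$ and the connecting map $\partial\colon\pi_4(\mathbb{S}^2)\to\pi_3(F)$ vanish, whence $\pi_4(M)\cong\pi_4(\mathbb{S}^2)=\ZZ_2$. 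For the connecting map one can use that for a bundle over $\mathbb{S}^2$ the map $\partial$ on $\pi_n(\mathbb{S}^2)=\pi_{n-1}(\Omega\mathbb{S}^2)$ is induced by the clutching datum; pulling the bundle back along the Hopf map $\mathbb{S}^3\to\mathbb{S}^2$ identifies $\partial(\eta^2)\in\pi_3(F)$ (where $\pi_4(\mathbb{S}^2)=\ZZ_2\langle\eta^2\rangle$) with the quadratic (Hopf) construction on $\partial(\eta)\in\pi_2(F)=H_2(F)$, and $\partial(\eta)$ is itself constrained because $\pi_3(M)\to\pi_3(\mathbb{S}^2)$ is rationally onto; for the fibre inclusion one uses that its kernel is $\partial(\pi_5(\mathbb{S}^2))$ with $\pi_5(\mathbb{S}^2)=\ZZ_2$.

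I expect this last step to be the genuine obstacle: one must control the low homotopy of $F$ and the clutching of the bundle tightly enough — ideally showing $F$ is a homotopy $5$-sphere, so that $\pi_3(F)=\pi_4(F)=0$ and the homotopy sequence collapses immediately, and otherwise ruling out the surviving possibilities by analysing $\partial\colon\pi_4(\mathbb{S}^2)\to\pi_3(F)$ and the image of $\pi_4(F)$ in $\pi_4(M)$ against the hypothesis $\pi_4(M)\ne\ZZ_2$. Everything in the earlier steps is essentially formal.
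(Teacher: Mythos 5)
There is a genuine gap, in two places. First, your claim that the fibre $F^5$ of a bundle over $\mathbb{S}^2$ with simply connected total space must itself be simply connected is false: the fibrewise universal cover $\widehat M\to M$ is not a covering space of $M$ (its ``fibres'' $\pi_1(F)$ are discrete, but local triviality over $M$ fails; the Hopf fibration $S^1\to\mathbb{S}^3\to\mathbb{S}^2$ is already a counterexample, where the fibrewise universal cover is an $\RR$-bundle homotopy equivalent to $\mathbb{S}^2$, certainly not a cover of $\mathbb{S}^3$). All the exact sequence gives you is that $\pi_1(F)$ is a quotient of $\pi_2(\mathbb{S}^2)=\ZZ$, hence cyclic and (in the rational-sphere case) finite but possibly nontrivial. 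The paper has to work around exactly this: it passes to the pullback of the bundle along the Hopf map $\mathbb{S}^3\to\mathbb{S}^2$ and then to the universal cover $P'$ of that pullback, whose fibres are the universal cover $\tilde F$ of $F$.

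Second, and more seriously, the mathematical core of the statement --- ruling out the case where $F$ is a rational homology $5$-sphere by deriving $\pi_4(M)\simeq\ZZ_2$ --- is only sketched as a plan, and you yourself flag it as ``the genuine obstacle.'' Your proposed route (Whitehead's exact sequence for $F$, clutching data controlling the connecting maps $\partial:\pi_4(\mathbb{S}^2)\to\pi_3(F)$ and the image of $\pi_4(F)\to\pi_4(M)$) is not carried out, and for a general rational homology $5$-sphere with finite cyclic $H_2$ it is far from clear it can be: $\pi_3(F)\simeq\Gamma(H_2(F))$ is then a nontrivial finite group and you have no control over the relevant maps. The paper's resolution uses inputs you do not have in place: the pullback over $\mathbb{S}^3$ has $2$-connected universal cover $P'$ (identified with a lens-space quotient of the primitive circle bundle $P\to M$), whence $w_2(\tilde F)=j^*w_2(P')=0$, so $\tilde F$ is a simply connected \emph{spin} $5$-manifold; Smale's classification forces the torsion of $H_2(\tilde F,\ZZ)$ to be of the form $\bigoplus_i\ZZ_{k_i}\oplus\ZZ_{k_i}$, while the Gysin-type sequence over $\mathbb{S}^3$ forces $H^3(\tilde F,\ZZ)$ to be finite cyclic; together these give $\tilde F\cong\mathbb{S}^5$, and then $\pi_4(M)\simeq\pi_4(P')\simeq\pi_4(\mathbb{S}^3)=\ZZ_2$ contradicts the hypothesis. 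Until you supply an argument of comparable strength identifying $\tilde F$ (or otherwise computing the two maps in your exact sequence), the proof is incomplete. The earlier reductions (only $\mathbb{S}^2$ survives as a base; the rational types $f_2=f_3=f_5=1$ and $f_1=f_2=f_5=1$ are impossible) are essentially correct and parallel to the paper, modulo writing out the spectral sequence computation you assert.
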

\begin{proof}
Since $M$ is simply connected, we can restrict our attention to submersions only simply connected manifolds. By Proposition \ref{P:Submersion-dim-7}, the only possible sumbersion from $M$ is onto $\sph^2$.

Suppose that there is a submersion $\pi: M\to \mathbb{S}^2$, with fiber $F^5$. Since $M$ is rationally homotopy equivalent to $\mathbb{S}^2\times \mathbb{S}^5$, it follows from the long exact sequence in rational homotopy that the nonzero ranks $f_i$ of the rational homotopy groups $\pi_i^\QQ(F)=\QQ^{f_i}$ have to be one of the following:
\begin{enumerate}
\item $f_5=1$, or
\item $f_1=f_2=f_5=1$, or
\item $f_2=f_3=f_5=1$.
\end{enumerate}
The latter cannot occur, since by \eqref{E:dimension-formula}, the cohomological dimension of $F$ equals $7\neq 5$. 

To rule out the remaining cases, we need to consider the Leray-Serre spectral sequence in cohomology of the fibration $M\to \mathbb{S}^2$, with coefficients in $R=\QQ$ or $\ZZ$. Since the base of the fibration is $\mathbb{S}^2$, the elements in the $E_2$-page are nonzero only in the $0$-th and $2$-nd columns, and we obtain a long exact sequence of the form
\begin{align}
\label{E:LES} 0 & \to H^1(M,R)\to H^1(F,R)\to H^0(F,R)\\
  & \stackrel{\beta}{\to} H^2(M,R)\to H^2(F,R)\to H^1(F,R)\to H^3(M,R)\to \dots \nonumber
\end{align}

\smallskip

For the case $f_1=f_2=f_5=1$, the fact that $\pi_1^\QQ(F)=\QQ$ implies that $H^1(F,\QQ)=\QQ$, and we obtain the following contradiction. On the one hand, since $H^2(M,\QQ)=H^0(F,\QQ)=\QQ$ and $H^1(M,\QQ)=H^3(M,\QQ)=0$, it follows from the long exact sequence \eqref{E:LES} with rational coefficients that $H^2(F,\QQ)=\QQ^2$. On the other hand, the minimal model $(\Lambda V_F,d)$ of $F$ equals $(\Lambda(x_1, x_2, x_5),d)$ for some differential $d$, and in particular $\dim H^2(F,\QQ)=\dim H^2(\Lambda V_F,d)\leq \dim \Lambda^2V_F=1$.

\medskip

The rest of the proof is dedicated to rule out the possibility $f_5=1$. In this case $F$ has the same model as $\mathbb{S}^5$, and the groups $H^i(F,\ZZ)$ are finite for $i=1,\ldots ,4$. It follows  from the long exact sequence \eqref{E:LES} with integer coefficients that the map $\beta: H^0(F,\ZZ)\to H^2(M,\ZZ)$ is nonzero. Observe that under the identification $H^0(F,\ZZ)\cong H^2(\mathbb{S}^2,\ZZ)$ from the $E^2$-page of the spectral sequence, the map $\beta$ is equivalent to
\[
\pi^*:H^2(\mathbb{S}^2,\ZZ)=\ZZ\to H^2(M,\ZZ)=\ZZ.
\]
Therefore, the generator $g$ of $H^2(\mathbb{S}^2,\ZZ)$ is sent to $k\bar{g}$, where $\bar{g}$ is a generator of $H^2(M,\ZZ)$ and $k$ is a positive integer. 

Recall that principal $S^1$-bundles over a manifold $X$ are in bijective correspondence with the elements of $H^2(X,\ZZ)$, via the first Chern class. The generator $g$ corresponds to the Hopf fibration $\mathbb{S}^3\to \mathbb{S}^2$. Moreover, letting $P\to M$ denote the principal $S^1$-bundle with Chern class $\bar{g}$, it is easily checked from the Gysin sequence of $S^1\to P\to M$ that $P$ is 2-connected. By taking a cyclic subgroup $\ZZ_k\subset S^1$, the quotient $P/\ZZ_k$ is the total space of an $S^1$ bundle $P/\ZZ_k\to M$, with first Chern class $k\bar{g}$. Therefore, the bundle $\pi^*\mathbb{S}^3\to M$ is isomorphic to $P/\ZZ_k\to M$, and in particular $\pi^*\mathbb{S}^3$ is homeomorphic to $P/\ZZ_k$.

The submersion $\hat{\pi}:\pi^*\mathbb{S}^3\to \mathbb{S}^3$, whose fibers are diffeomorphic to $F$, lifts to a submersion $\tilde{\pi}:P'\to \mathbb{S}^3$ where $P'$ denotes the universal cover of $\pi^*\mathbb{S}^3$. Here $P'$ is homeomorphic to $P$, is 2-connected, and the fibers are diffeomorphic to the universal cover $\tilde{F}$ of $F$. Observe that $\tilde{F}$ is a simply connected rational sphere.

%

Let $j:\tilde{F}\to P'$ be the inclusion of a fiber of $\tilde\pi$. Since $P'\to \mathbb{S}^3$ is a submersion, one has the well-known equality $j^*(w(P')) = w(\tilde{F})$ on the total Stiefel-Whitney classes. 
However, since $P'$ is 2-connected, one has $w_2(P')=0$ and therefore $w_2(\tilde{F})=0$. In other words,  $\tilde{F}$ is a simply connected, compact, spin $5$-manifold.

It follows from Smale's classification of simply connected, spin $5$-manifolds that the torsion of $H_2(\tilde{F},\ZZ)=H^3(\tilde{F},\ZZ)$ is isomorphic to a sum $\bigoplus_i \ZZ_{k_i}\oplus\ZZ_{k_i}$. The long exact sequence \eqref{E:LES} in integer cohomology for $\tilde{\pi}:P'\to \mathbb{S}^3$ yields 
\[
\dots\to H^5(\tilde{F},\ZZ)\to H^{3}(\tilde{F},\ZZ)\to H^{6}(P',\ZZ)\to\dots
\]
Since $H^6(P',\ZZ)=H_2(P',\ZZ)=0$ and $H^5(\tilde{F},\ZZ)=\ZZ$, it follows that $H^3(\tilde{F},\ZZ)$ is finite and cyclic, thus in order not to contradict Smale's result, one must have $H^3(\tilde{F},\ZZ)=0$, and this implies that $\tilde{F}$ is diffeomorphic to the sphere $\mathbb{S}^5$. However, from the long exact sequence for $\tilde{F}\to P'\to \mathbb{S}^3$ we obtain $\pi_4(P')\simeq\pi_4(\mathbb{S}^3)=\ZZ_2$. From the long exact sequence of $S^1\to P'\to M$ it follows that $\pi_4(P')=\pi_4(M)$ and thus $\pi_4(M)\simeq \ZZ_2$, in contradiction with the assumption.

\end{proof}

\subsection*{Proof of Theorem \ref{T:Eschenburg}} Let $M$ be homotopy equivalent to an Eschenburg space $M'$. By Belegradek and Kapovitch (cf. \cite[Lemma 8.2]{BK}), $M'$ (and thus $M$) is rationally homotopy equivalent to $\sph^2\times\sph^5$, and by Proposition 31 in \cite{Esc}, $\pi_4(M)=\pi_4(M')=0$. The result now follows from Proposition \ref{P:No-S2}.

\section{Submersions from 13-dimensional examples}\label{S:Bazaikin}

The goal of this section is to prove Theorem \ref{T:Bazaikin}. This result will follow from the more topological Propositions \ref{P:Submersion-dim-13} and \ref{P:No-CP2} below.

\begin{proposition}\label{P:Submersion-dim-13}
Let $B^{13}$ be a compact, simply connected manifold that is rationally homotopy equivalent to $\mathbb{S}^9\times \CP^2$. If $\pi:B\to X$ is a submersion onto a simply connected compact manifold $X$ of dimension $\leq 7$, then $X$ has the rational homotopy groups of $\CP^2$.
\end{proposition}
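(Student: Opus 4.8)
The plan is to mimic the argument of Proposition~\ref{P:Submersion-dim-7}: since $B$ is rationally homotopy equivalent to $\sph^9\times\CP^2$ it is rationally elliptic, so by Proposition~\ref{P:rat-ell} the base $X$ and the fiber $F$ are rationally elliptic as well. First I would enumerate, for each dimension $n=\dim X$ with $2\le n\le 7$, the possible tuples of ranks $c_i=\dim\pi_i^\QQ(X)$ of a simply connected rationally elliptic manifold, using the Friedlander--Halperin restrictions together with the classification recorded in Table~\ref{Table} (dimensions $\le 6$) and its $\dim 7$ extension. Note that the total rational homotopy rank of $B$ is small (it agrees with that of $\sph^9\times\CP^2$: a generator in degrees $2,5,9$ — more precisely $\dim\pi_2^\QQ=\dim\pi_5^\QQ=\dim\pi_9^\QQ=1$, all other ranks zero), and via the long exact sequence in rational homotopy for $F\to B\to X$ this severely constrains the ranks $f_i=\dim\pi_i^\QQ(F)$: every $c_i$ must be ``covered'' either by the corresponding $\pi_i^\QQ(B)$ or by $\pi_{i+1}^\QQ(F)$/$\pi_i^\QQ(F)$, and $\sum f_i + \sum c_i$ cannot exceed $3$ plus the contribution forced by exactness.

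Then, for each admissible pair $(X,F)$ other than $X\simeq_\QQ\CP^2$, I would derive a contradiction by one of the two mechanisms already used in the $7$-dimensional case. The first mechanism is the dimension-formula obstruction: using Equation~\eqref{E:dimension-formula} one computes the cohomological dimension $n_F$ of the (rationally elliptic, Poincar\'e-duality) minimal model $(\Lambda V_F,d)$ from the $f_i$, and in almost all cases $n_F\neq 13-n=\dim F$, which is impossible for a compact orientable manifold. The second mechanism handles the cases where the dimensions happen to match: here one looks at the relative minimal model $(\Lambda V_F\otimes\Lambda V_X,D)$ of $B$ (Section~\ref{SS:relat.model}) and observes that some generator, typically a degree-$2$ generator $y_2$ coming from the fiber or an evenly-graded base generator, has a power $y_2^k$ that survives to a nonzero class in $H^{2k}(B,\QQ)$ in a degree where $H^*(\sph^9\times\CP^2,\QQ)$ vanishes --- e.g.\ $H^4(B,\QQ)\cong H^4(\CP^2,\QQ)=\QQ$ is already generated by the pullback of the $\CP^2$-class, so a second independent degree-$2$ class would force $\dim H^4(B,\QQ)\ge 2$, a contradiction; and $H^6(B,\QQ)=0$ kills any base or fiber with a degree-$2$ generator whose cube is nontrivial. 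Candidates to rule out this way include $X$ with the rational type of $\sph^2$, $\sph^3$, $\sph^4$, $\sph^5$, $\sph^2\times\sph^4$, $\CP^3$, $\sph^6$, $\sph^7$, $\sph^3\times\sph^4$, $\sph^5\times\sph^2$, the Wallach flag $W^6$, $\sph^2\times\sph^5$, etc.; in each the fiber dimension $13-n$ together with the forced $f_i$ either breaks \eqref{E:dimension-formula} or produces surplus even-degree cohomology.

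The main obstacle I anticipate is purely bookkeeping: the list of rationally elliptic types in dimensions $\le 7$ is considerably longer than in dimension $\le 6$, and for each one the long exact sequence in rational homotopy can admit two or three distinct rank patterns for $F$ (depending on which connecting homomorphisms vanish), so one must check on the order of a few dozen $(X,F)$ pairs. A subtle point is that for $X$ rationally $\CP^2$ itself the argument must \emph{not} produce a contradiction — indeed $B$ does submerse onto $\CP^4$ with $\CP^2$-type total space behavior is irrelevant here, but one should double-check that the $\CP^2$-case passes both tests (the fiber is then $9$-dimensional with $f_5=f_9=1$, i.e.\ rationally $\sph^5\times\sph^9$... actually $f_2=f_9=1$ giving rationally $\CP^2\times\sph^9$ modulo the base, and \eqref{E:dimension-formula} gives $n_F=9$, consistent, with no surplus even cohomology). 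A second possible snag is a case where \emph{both} tests are inconclusive; if that occurs I would fall back on a finer invariant, such as the structure of $H^*(B,\QQ)$ as a module over $H^*(X,\QQ)$ via the Leray--Serre spectral sequence, exactly as the paper does in Proposition~\ref{P:No-CP2} for the borderline $X=\CP^2$ situation. But I expect the two elementary mechanisms above to dispatch every case except $\CP^2$.
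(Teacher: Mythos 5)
Your proposal follows essentially the same route as the paper: rational ellipticity of $X$ and $F$ via Proposition~\ref{P:rat-ell}, enumeration of the possible ranks $c_i$ from Table~\ref{Table}, the long exact sequence in rational homotopy to pin down the $f_i$, Equation~\eqref{E:dimension-formula} to eliminate almost every case, and a relative minimal model argument for the survivors. The single case the paper finds that passes the dimension test ($X$ rationally $\sph^5$ with $f_2=f_9=1$, so $F$ rationally $\CP^4$) is dispatched there by noting $x_5$ survives to $H^5(B,\QQ)=0$, which is a variant of exactly the fallback mechanism you describe (your observation that $z_2^3$ would survive to $H^6(B,\QQ)=0$ also works for that case).
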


\begin{proof}
This proposition is proved along the same lines as the proof of Proposition \ref{P:Submersion-dim-7}: since $B$ is rationally homotopy equivalent to $\mathbb{S}^9\times \CP^2$, in particular it is rationally elliptic and by Proposition \ref{P:rat-ell} so are $X$ and $F$. Since $X$ is compact, simply connected and with $\dim X\leq 7$, the rational homotopy groups of $X$ fall into the finite list in Table \ref{Table}. Assume moreover that $X$ does not have the rational homotopy groups of $\CP^2$. For any such case, we use the long exact sequence in rational homotopy for the submersion $B\to X$ to compute the ranks $f_i=\dim \pi_i^\QQ(F)$ of the fiber $F$. Finally, we use Equation \eqref{E:dimension-formula} to compute the cohomological dimension of $H^*(F,\QQ)$, which never agrees with the dimension of $F$ with the exception of one case, namely if $X$ has the rational homotopy groups of $\mathbb{S}^5$, and $f_2=f_9=1$.

In this case, the minimal models of $X$ and $F$ are respectively
\[
(\Lambda V_X,d)=(\Lambda (x_5),d=0),\quad (\Lambda V_F,d)= (\Lambda(z_2,z_9),dz_2=0, dz_9=z_2^5).
\]
Then a relative model for $B$ is $(\Lambda V_B,D)=(\Lambda(z_2,x_5,z_9),D)$, where $D(x_5)=0$. In this case, since $D(\Lambda^4V_B)=0$, $x_5$ represents a nonzero element in $H^5(\Lambda V_B,D)=H^5(B,\QQ)$, contradicting the fact that $H^5(B,\QQ)=H^5(\mathbb{S}^9\times \CP^2,\QQ)=0$.

\end{proof}

Observe that for $B^{13}$ as in Proposition \ref{P:Submersion-dim-13}, we have that $H^1(B,\ZZ)=0$ and $H^2(B,\ZZ)=\ZZ$.

\begin{proposition}\label{P:No-CP2}
Let $B^{13}$ be as in Proposition \ref{P:Submersion-dim-13}, and assume in addition that the truncated cohomology ring $H^{\leq 4}(B,\ZZ)$ is isomorphic to the integral cohomology of $\CP^2$. Then  if $B$ admits a submersion onto a manifold $X$ of dimension $\leq 7$, it must be $X\simeq \CP^2$ and $\pi_i(B)\simeq \pi_i(\sph^5)$ for $i=3,\ldots 8$.


\end{proposition}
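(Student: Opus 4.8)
The plan is to mirror the structure of the proof of Proposition \ref{P:No-S2}, using the rational splitting $B\sim_\QQ \CP^2\times\sph^9$ together with the integral hypothesis on $H^{\leq 4}(B,\ZZ)$ to pin down the topology of the fiber. First I would invoke Proposition \ref{P:Submersion-dim-13} to reduce to the case $X\simeq_\QQ\CP^2$; since $\dim X\le 4$ here (the base of a submersion from a $13$-manifold with $5$-dimensional fiber), and $X$ is a simply connected compact manifold with the rational homotopy of $\CP^2$, the long exact sequence in rational homotopy for $F\to B\to X$ forces the nonzero ranks of $F$ to be $f_5=1$, i.e.\ $F$ is a rational $5$-sphere. (One should check along the way that the a priori alternative possibilities for $\pi_i^\QQ(F)$ give the wrong cohomological dimension via \eqref{E:dimension-formula}, exactly as in Proposition \ref{P:No-S2}.) I would also note that $\pi_2(X)=\ZZ$ and, using the truncated ring hypothesis $H^{\le 4}(B,\ZZ)\cong H^{\le 4}(\CP^2,\ZZ)$ together with the edge homomorphism $\pi^*:H^2(X,\ZZ)\to H^2(B,\ZZ)$ in the Serre spectral sequence, that $\pi^*$ is an isomorphism onto $H^2(B,\ZZ)=\ZZ$ and $X$ has the \emph{integral} cohomology ring of $\CP^2$; in particular $X$ is homeomorphic to $\CP^2$ if one appeals to the classification of such manifolds, or at least $X\simeq\CP^2$ up to homotopy, which is what the statement claims.

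The heart of the argument is then to identify the fiber. Following Proposition \ref{P:No-S2}, I would pull the bundle back along the Hopf-type fibration $\sph^5\to\CP^2$: more precisely, form the principal $S^1$-bundle $P\to B$ with Chern class a generator of $H^2(B,\ZZ)=\ZZ$, check via the Gysin sequence (using $H^{\le4}(B,\ZZ)\cong H^{\le4}(\CP^2,\ZZ)$) that $P$ is $2$-connected, and realize the pullback $\pi^*\sph^5\to B$ as a quotient $P/\ZZ_k$, so that its universal cover $P'$ is homeomorphic to $P$ and $2$-connected. Restricting the submersion to $P'\to \sph^5$ gives fibers diffeomorphic to the universal cover $\tilde F$ of $F$, a simply connected compact rational homology $5$-sphere. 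The Stiefel-Whitney relation $j^*w(P')=w(\tilde F)$ for the fiber inclusion, combined with $w_2(P')=0$ (as $P'$ is $2$-connected), shows $\tilde F$ is spin. Smale's classification of simply connected spin $5$-manifolds says the torsion of $H_2(\tilde F;\ZZ)$ is of the form $\bigoplus_i\ZZ_{k_i}\oplus\ZZ_{k_i}$, while the long exact sequence \eqref{E:LES}-analogue for $P'\to\sph^5$ (whose base is now $\sph^5$, so the relevant sequence relates $H^*(\tilde F)$, $H^*(P')$ with a shift by $5$) forces $H^3(\tilde F;\ZZ)$ to be finite cyclic; hence it must vanish, and $\tilde F$ is diffeomorphic to $\sph^5$. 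Since $\tilde F\to F$ is a finite cover and $F$ is a rational $\sph^5$ it is then standard that $F\simeq\sph^5$ rationally, and in any case $\pi_i(F)=\pi_i(\sph^5)$ for all $i$, whence from the long exact sequence of $F\to B\to\CP^2$ and $\pi_i(\CP^2)=0$ for $3\le i\le 8$ we get $\pi_i(B)\cong\pi_i(F)\cong\pi_i(\sph^5)$ in that range.

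There is one subtlety compared to Proposition \ref{P:No-S2}: there the base after pulling back was $\sph^3$, and $\pi_4(\sph^3)=\ZZ_2$ was used to derive the final contradiction under the extra hypothesis $\pi_4(M)\ne\ZZ_2$. Here the base is $\sph^5$, which is $4$-connected, so pulling back a second time is not needed and there is no corresponding obstruction; instead the spectral-sequence bookkeeping is simply that the fibration $F\to B\to\CP^2$ with $F\sim_\QQ\sph^5$ has all its rational cohomology concentrated in the expected bidegrees, consistent with $B\sim_\QQ\CP^2\times\sph^5$ — wait, $B$ is $13$-dimensional, so in fact the base must be $\CP^2$ (dimension $4$) giving $\dim F=9$, not $5$. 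I would therefore redo the fiber analysis with $F$ a rational $\sph^9$, forming instead the principal $S^1$-bundle and pulling back along $\sph^9\to\CP^2$? No — $\CP^2$ is the base of the classifying-space-level $S^1$ bundle with total space $\sph^5$, and $S^1\to\sph^5\to\CP^2$ is the relevant one regardless; the point is that $\pi^*\sph^5\to B$ is then again an $S^1$-bundle over $B$ with fiber $F$, and passing to $\sph^5$ the fiber is $\tilde F$, a rational $\sph^9$ — so Smale's theorem is inapplicable and one must instead use the classification of simply connected spin manifolds in dimension... Here I anticipate the main obstacle: choosing the correct sequence of bundle pullbacks so that the fiber ends up low-dimensional enough for a classification result (Smale in dimension $5$, or its higher analogues) to apply, and then propagating the resulting identification of that fiber back up to conclude $\pi_i(B)\cong\pi_i(\sph^5)$ for $3\le i\le 8$. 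My expectation is that the authors iterate the $S^1$-bundle construction until the base becomes $\sph^5$ and the fiber becomes $5$-dimensional, at which point Smale's classification and a Stiefel–Whitney/spin argument identical in spirit to Proposition \ref{P:No-S2} finish the proof; getting that iteration and its bookkeeping right is the step I would spend the most care on.
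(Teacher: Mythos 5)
Your proposal has the right overall skeleton (reduce to $X\simeq\CP^2$ via Proposition \ref{P:Submersion-dim-13}, lift to a circle bundle over $B$ that fibers over $\sph^5$, then identify the fiber), but it contains two genuine gaps, and the second one is at the mathematical core of the argument.

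First, the case analysis for the rational homotopy of the fiber is incomplete. Since $\dim X=4$, the fiber is $9$-dimensional, and the long exact sequence leaves two possibilities: $f_9=1$, or $f_1=f_2=f_9=1$. You cannot dismiss the latter ``via \eqref{E:dimension-formula} exactly as in Proposition \ref{P:No-S2}'': plugging $f_1=f_2=f_9=1$ into \eqref{E:dimension-formula} gives $1+9-1=9$, which is the correct dimension, so the formula detects nothing. The paper rules this case out by writing down the relative minimal model $(\Lambda(z_1,z_2,x_2,x_5,z_9),D)$, showing $D(z_1)=x_2$ and $D(z_2)=0$ are forced by $H^1(B,\QQ)=0$ and $H^2(B,\QQ)=\QQ$, and then checking that $z_2^3$ survives to a nonzero class in $H^6(B,\QQ)=0$. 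Some such ad hoc computation is needed; the dimension count alone does not suffice.

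Second, and more seriously, your plan for identifying the fiber cannot work as stated. Lifting the submersion to the circle bundle $E\to B$ and the fibration $E\to\sph^5$ replaces the fiber $F$ by its universal cover $\tilde F$, which is still $9$-dimensional; no amount of iterating $S^1$-bundle constructions will lower the fiber dimension to $5$, so Smale's classification of simply connected spin $5$-manifolds is simply unavailable here. The paper's actual argument is different: from the Gysin sequence of $S^1\to E\to B$ and the hypothesis $H^{\leq 4}(B,\ZZ)\cong H^{\leq 4}(\CP^2,\ZZ)$ one gets that $E$ is $4$-connected; the Wang sequence of $\tilde F\to E\to\sph^5$ then shows $\tilde F$ is $3$-connected with $H_4(\tilde F,\ZZ)\cong\ZZ_r$ cyclic and $H^i(\tilde F,\ZZ)=\ZZ$ for $i=0,9$ only otherwise; and the key input is Wall's classification of $(s-1)$-connected $(2s+1)$-manifolds with $s=4$, which provides a nondegenerate, strongly skew-symmetric linking form $b:H_4(\tilde F,\ZZ)\times H_4(\tilde F,\ZZ)\to\QQ/\ZZ$. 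On a cyclic group a strongly skew-symmetric form vanishes identically, so nondegeneracy forces $\ZZ_r=0$ and $\tilde F\simeq\sph^9$. The conclusion $\pi_i(B)\simeq\pi_i(\sph^5)$ for $i=3,\dots,8$ then follows from the two long exact sequences for $\tilde F\to E\to\sph^5$ and $S^1\to E\to B$. Note also that, unlike Proposition \ref{P:No-S2}, this proposition does not end in a contradiction internally; the contradiction with $\pi_8$ is only extracted in the proof of Theorem \ref{T:Bazaikin}.
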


\begin{proof} Suppose there is a submersion $B\to X$ where $X$ has dimension at most $7$, where as usual we can assume that $X$ is simply connected. Then by Proposition \ref{P:Submersion-dim-13} the base space $X$ has the rational homotopy groups of $\CP^2$, and by Lemma 3.2 of  \cite{PP}, $X$ is homeomorphic to $\CP^2$.

From the long exact sequence of the fibration $B\to X$, the ranks $f_i$ of the rational homotopy groups of the fiber $F$ must be either:
\begin{enumerate}
\item $f_1=f_2=f_9=1$, or
\item $f_9=1$.
\end{enumerate}
In the first case, the minimal models of $X$ and $F$ are, respectively,
\[
(\Lambda (x_2,x_5),dx_2=0, dx_5=x_2^3),\quad (\Lambda(z_1,z_2,z_9),dz_1=dz_2=0, dz_9=z_2^5).
\]
A relative model for $B$ is then
\[
(\Lambda V_B,D)=(\Lambda(z_1,z_2,x_2,x_5,z_9),D),
\]
with $D(x_2)=0$, $D(x_5)=x_2^3$ and $D(z_1)=ax_2$ with $a\in\{0,1\}$ (cf. Section \ref{SS:relat.model}). Since $H^1(B,\QQ)=0$, it must be $D(z_1)=x_2$. Then, since $H^2(B,\QQ)=\QQ$, it must be $D(z_2)=0$. However, in this way $D(z_2^3)=0$, but $\Lambda^5V_B=\Span(x_5, z_2^2z_1,x_2^2z_1, z_2x_2z_1)$ and $D(\Lambda^5V_B)=\Span(x_2^3, x_2^2z_2, x_2z_2^2)$. In particular, $z_2^3$ does not lie in $D(\Lambda^5V_B)$, and therefore it defines a nonzero class in $H^6(\Lambda V_B,D)\simeq H^6(B,\QQ)$, in contradiction with the fact that $H^6(B,\QQ)= 0$.

For the rest of the proof we assume to be in the second case, and the argument goes along the same lines as the proof of Proposition \ref{P:No-S2}. Notice that $F$ is, in this case, a rational sphere. Using the Leray-Serre spectral sequence of the fibration $\pi: B\to X$, we deduce that the map $\pi^*:H^2(X,\ZZ)\to H^2(B,\ZZ)$ is nonzero. A generator of $H^2(X,\ZZ)\simeq \ZZ$ corresponds to the first Chern class of the Hopf fibration $\mathbb{S}^5\to \CP^2\simeq X$. Now, let $E\to B$ denote the circle bundle whose first Chern class is a generator of $H^2(B,\ZZ)\simeq \ZZ$, and observe that in particular $\pi_1(E)=0$. Then $\pi$ can be lifted, up to homotopy, to a fibration $p:E\to \mathbb{S}^5$, and whose fiber $\tilde{F}$ is the universal cover of $F$ and hence a simply connected rational sphere.

We can obtain some information about the topology of $E$ from the properties of the Gysin sequence of $S^1\to E\to B$. Since the first Chern class of a circle bundle equals its Euler class, it follows from the assumptions on the cohomology of $B$ that $H^i(E,\ZZ)=0$ for $i\leq 4$ and $H^5(E,\ZZ)=\ZZ$. In particular, $E$ is $4$-connected.


Next we study the topology of the fiber $\tilde{F}$. Consider the Leray-Serre spectral sequence in integral cohomology of the fibration $\tilde{F}\to E\stackrel{p}{\to}\mathbb{S}^5$. The elements in the $E_2$-page are nonzero only in the $0$-th and $5$-th columns, thus we obtain a long exact sequence
\begin{align}
\label{E:LES over S5} 0 & \to H^4(E,\ZZ)\to H^4(\tilde{F},\ZZ)\to H^0(\tilde{F},\ZZ)\\
  & \to H^5(E,\ZZ)\to H^5(\tilde{F},\ZZ)\to H^1(\tilde{F},\ZZ)\to H^6(E,\ZZ)\to \dots \nonumber
\end{align}
It is straightforward to compute the cohomology of $\tilde{F}$ from \eqref{E:LES over S5}:
\[
H^i(\tilde{F},\ZZ) = \begin{cases}
    \ZZ, & i=0,9\\
    \ZZ_r, & i=5; \textrm{ for some integer } r\geq 1,\\
    0, & \textrm{otherwise.}
  \end{cases}  
\]
By Poincar\'e Duality, $\tilde{F}$ is 3-connected and $H_4(\tilde{F},\ZZ)=H^5(\tilde{F},\ZZ)=\ZZ_r$.

We claim that $\ZZ_r=0$. In fact, notice first that $\tilde{F}$ is a $(s-1)$-connected $(2s+1)$-manifolds, with $s=4$. By the seminal work by Wall on highly connected manifolds \cite[Cor. 2]{W}, there exists a nondegenerate bilinear form
\[
b:H_4(\tilde{F},\ZZ)\times H_4(\tilde{F},\ZZ) \to \QQ/\ZZ.
\]
which is, in this case, strongly skew-symmetric, which means that $b(x,x)=0$ for every $x\in H_4(\tilde{F},\ZZ)$. Since in this case $H_4(\tilde{F},\ZZ)\simeq \ZZ_r$ is cyclic, it must be $b=0$: in fact, for any $[m], [n]\in \ZZ_r$, we have $b([m],[n])=mn\cdot b([1],[1])=0$. However, since $b$ is nondegenerate, it follows that $\ZZ_r=0$, as claimed.

By Poincar\'e Duality, Hurewicz Theorem, and Whitehead Theorem, it then follows that $\tilde{F}$ is homotopy equivalent (hence homeomorphic) to a sphere $\mathbb{S}^9$ and, from the long exact sequence in homotopy for $\tilde{F}\to E\to \mathbb{S}^5$, we obtain that $\pi_i(E)\to \pi_i(\mathbb{S}^5)$ is an isomorphism for $i=1\ldots 8$. On the other hand, it follows from the long sequence in homotopy for $S^1\to E\to B$ that $\pi_i(B)\simeq \pi_i(E)$ for $i=3,\ldots 8$, and the result follows.

\end{proof}

\subsection*{Proof of Theorem \ref{T:Bazaikin}} Let $B$ be homotopy equivalent to a Bazaikin space $B'$. By Belegradek and Kapovitch (cf. \cite[Lemma 8.2]{BK}), $B'$ (and thus $B$) is rationally homotopy equivalent to $\CP^2\times\sph^9$. Moreover, one can check in \cite{FZ2} that the Bazaikin spaces satisfy the conditions on the cohomology ring required in Proposition \ref{P:No-CP2}, but $\pi_8(B')=0\neq \ZZ_{24}=\pi_8(\sph^5)$. The result now follows from Proposition \ref{P:No-CP2}.

\begin{table}[!htb]
  \begin{center}
    \renewcommand{\arraystretch}{1.1}
    $
    \begin{array}{|c||c|c|}\hline
      n & \text{ Example for } X^n  & c_i =\dim \pi_i^\QQ(X)\\ \hline\hline
      2 & \mathbb{S}^2 & c_2=c_3=1 \\\hline
      3 & \mathbb{S}^3 & c_3=1 \\\hline
      4 & \mathbb{S}^4 & c_4=c_7=1 \\
        & \CP^2 & c_2=c_5=1 \\
       & \mathbb{S}^2\times \mathbb{S}^2 \text{ or } \CP^2\sharp\CP^2 & c_2=c_3=2 \\\hline
      5 & \mathbb{S}^5 & c_5=1 \\
       & \mathbb{S}^2\times \mathbb{S}^3 & c_2=1, c_3=2 \\\hline
      6 & \CP^3 & c_2=c_7=1 \\
       & \mathbb{S}^3\times \mathbb{S}^3 & c_3=2 \\
       & \mathbb{S}^6 & c_6=c_{11}=1 \\
       & \mathbb{S}^2\times \mathbb{S}^4 & c_2=c_3=c_4=c_7=1 \\
       & \mathbb{S}^2\times \mathbb{S}^2\times \mathbb{S}^2 & c_2=c_3=3 \\
       & W^6 \text{ or } \mathbb{S}^2\times\CP^2 & c_3=c_5=1, c_2=2 \\\hline
      7 & \mathbb{S}^7 & c_7=1 \\
       & \mathbb{S}^3\times \mathbb{S}^4 & c_3=c_4=c_7=1 \\
       & \mathbb{S}^2\times \mathbb{S}^5 & c_2=c_3=c_5=1 \\
       & \mathbb{S}^2\times \mathbb{S}^2\times \mathbb{S}^3 & c_2=2, c_3=3 \\\hline
    \end{array}
    $
  \end{center}
  \caption{Possibilities for the rational homotopy groups of a low dimensional space.}\label{table: homotopy types}
  \label{Table}
\end{table}

\bibliographystyle{amsplain}

\hspace*{1em}\\
\end{document}